\def\Jac{{\mathop{\rm Jac}}}
\newcommand{\Z}{\mathbb Z}
\newcommand{\Log}{\mathrm{Log}\,}
\newcommand{\Vol}{\textrm{Vol}}
\newtheorem{remark}{Remark}[section]
\newtheorem{theorem}{Theorem}[section]
\newtheorem{definition}{Definition}[section]
\newtheorem{lemma}{Lemma}[section]
\begin{document}
\title{Amoeba finite basis does not exist in general}
\author{Mounir Nisse}
\date{}

\address{School of Mathematics KIAS, 87 Hoegiro Dongdaemun-gu, Seoul 130-722, South Korea.}
\email{\href{mailto:mounir.nisse@gmail.com}{mounir.nisse@gmail.com}}
\thanks{This research  is partially supported by NSF MPS grant DMS-1001615, and Max Planck Institute for Mathematics, Bonn, Germany.}
\subjclass{14T05, 32A60}
\keywords{Amoebas, coamoebas, and critical points of the logarithmic map}


\begin{abstract}
 We show that the amoeba of a generic complex algebraic variety of codimension $1<r<n$ do not have a finite basis. In other words, it is not the intersection of finitely many hypersurface amoebas. Moreover we give a geometric characterization of the topological  boundary of hypersurface amoebas  refining an earlier result of  F. Schroeter  and T. de Wolff \cite{SW-13}.
\end{abstract}

\maketitle



\section{Introduction}

Tropical geometry  combines  aspects of algebraic geometry, discrete geometry, computer algebra,  mirror symmetry and symplectic geometry. This geometry can be seen as a limiting regime of algebraic geometry, where some of its interesting varieties are the limit  of the so-called {\em amoebas}.  Amoebas of  algebraic (or analytic) varieties  are their image under the logarithm with base a real number $t$. In many cases, a tropical variety is the limit of these amoebas as $t$ goes to infinity (e.g., the case of tropical hypersurfaces).
In other words,  tropical objects are some how, the image of  a classical objects under the logarithm with base infinity, they are called non-Archimedean amoebas (this last naming comes from another view of tropical geometry, which coincides with the limiting view in the case of hypersurfaces, see for example \cite{IMS-07}).  

\vspace{0.3cm}

\indent Given an algebraically closed field $K$ endowed with a non-trivial real valuation $\nu : K \rightarrow \mathbb{R} \cup \{ \infty \}$, the tropical variety $\mathcal{T}rop (\mathcal{I})$ of an ideal $\mathcal{I}\subset K[x_1,\ldots, x_n]$ is defined as the topological closure of the set
$$
\nu (V(\mathcal{I})):= \{(\nu (x_1), \ldots , \nu (x_n))\,|\, (x_1,\ldots ,x_n)\in  V(\mathcal{I})\} \subset \mathbb{R}^n,
$$
where $V(\mathcal{I})$ denotes the zero set of $\mathcal{I}$ in $(K^*)^n$ (see for example  \cite{MS-09}).
A tropical basis for $\mathcal{I}$ is a generating set $\mathscr{B} = \{ g_1,\ldots , g_l\}$ of $\mathcal{I}$ such that
$$
 \mathcal{T}rop (\mathcal{I}) = \bigcap_{j=1}^l \mathcal{T}rop (\mathcal{I}_{g_j}),
$$
 where $\mathcal{I}_{g_j}$ denotes the  principal ideal generated by the  polynomial $g_j$. Bogart, Jensen, Speyer, Sturmfels, and Thomas initiated the
computational investigation of tropical bases \cite{BGSS-07} by  providing  Gr$\ddot{o}$bner bases techniques for computing tropical bases as well as by  lower bounds on the size of such bases  when $K$ is the field of Puiseux series $\mathbb{C}((t))$ and the ideal  $\mathcal{I}$ is linear with constant coefficients. Dropping the assumption on the degree of the polynomials,  Hept and Theobald  showed that there always exists a small tropical basis for a prime ideal $\mathcal{I}$ (see \cite{HT-09}). 

\vspace{0.2cm}

We will use  logarithm   with base $e$, so that the Archimedean amoeba of a subvariety of the complex torus $(\mathbb{C}^*)^n$
 is its image under the coordinatewise logarithm map. Amoebas were introduced by Gelfand, Kapranov, and Zelevinsky in 1994 \cite{GKZ-94}. The coamoeba of a subvariety of $(\mathbb{C}^*)^n$ is its image  under the coordinatewise argument map  to the real torus $(S^1)^n$. Coamoebas were introduced by Passare in a talk in 2004 (see e.g.,  \cite{NS-11} for more details about coamoebas).

\vspace{0.2cm} 

A variety $V\subset (\mathbb{C}^*)^n$ of codimension $r$ is  {\em generic} if it contains a point $p$ such that the Jacobian of the logarithmic map restricted to $V$ at the point $p$   has maximal rank  i.e.,  equal to $\min\{ n, 2(n-r)\}$. If the defining ideal $\mathcal{I}(V)$ of $V$ is generated by the set of polynomials  $\{ g_i\}_{i=1}^l$ with the following properties:
\begin{itemize}
\item[(i)]\,  $\mathscr{A}(V) = \bigcap_{i=1}^l \mathscr{A}(V_{g_i})$;
\item[(ii)]\, $\mathscr{A}(V)\subsetneq \bigcap_{i\in \{ 1,\ldots ,l\}\setminus s} \mathscr{A}(V_{g_i})$ for every $1\leq s\leq l$, \end{itemize}
then we said that $\{ g_i\}_{i=1}^l$ is an {\em amoeba basis} of $\mathscr{A}(V)$.

\vspace{0.2cm}

The aim of this paper is to show that  the main  result of Hept and Theobald  in \cite{HT-09} does not have an analogue 
 for  Archimedean amoebas of generic  complex varieties  of positive dimension and not  hypersurfaces. 

\begin{theorem}\label{Amoeba Basis}
If $V$ is a generic complex algebraic variety of codimension  $r$ with $1<r<n$, then its amoeba cannot have a finite basis.
\end{theorem}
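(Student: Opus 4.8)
The plan is to argue by contradiction, reducing the statement to a purely convex‑geometric obstruction and then producing that obstruction from the local geometry of $\partial\mathscr{A}(V)$.

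\smallskip

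\emph{Step 1 (reduction to convexity of the complement).} Suppose $\{g_i\}_{i=1}^l$ were an amoeba basis, so that $\mathscr{A}(V)=\bigcap_{i=1}^l\mathscr{A}(V_{g_i})$. Passing to complements gives $\mathbb{R}^n\setminus\mathscr{A}(V)=\bigcup_{i=1}^l\big(\mathbb{R}^n\setminus\mathscr{A}(V_{g_i})\big)$. By the Gelfand--Kapranov--Zelevinsky convexity theorem \cite{GKZ-94}, every connected component of the complement of a hypersurface amoeba is open and convex; hence $\mathbb{R}^n\setminus\mathscr{A}(V)$ is a finite union $C_1\cup\dots\cup C_m$ of convex open sets. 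Since each $\mathscr{A}(V_{g_i})$ is closed and contains $\mathscr{A}(V)$, a boundary point of the intersection is a boundary point of at least one factor, whence $\partial\mathscr{A}(V)\subseteq\bigcup_{j=1}^m\partial C_j$. The theorem is thereby reduced to showing that, for a generic $V$ of codimension $1<r<n$, the set $\mathbb{R}^n\setminus\mathscr{A}(V)$ cannot be a finite union of convex open sets.

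\smallskip

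\emph{Step 2 (local convexity forced at the boundary).} The consequence of Step 1 that I would contradict is local. If $x_0\in\partial\mathscr{A}(V)$ lies on $\partial C_j$, then, $C_j$ being convex and disjoint from $\mathscr{A}(V)$, it has a supporting hyperplane at $x_0$ bounding an open half-space that $\mathscr{A}(V)$ meets only on the far side; so near $x_0$ the amoeba is contained in the complement of a convex set that is tangent to it from the complement side. Thus an amoeba basis forces $\mathbb{R}^n\setminus\mathscr{A}(V)$ to be locally convex at every boundary point: through each $x_0\in\partial\mathscr{A}(V)$ there passes a convex subset of the complement touching $x_0$.

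\smallskip

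\emph{Step 3 (producing a point where local convexity fails).} Here I would use genericity together with the refined description of $\partial\mathscr{A}(V_f)$. Because $V$ is generic, $\mathrm{Log}|_V$ has a critical point $p$ of maximal rank $\min\{n,2(n-r)\}$, and near $\mathrm{Log}(p)$ the boundary $\partial\mathscr{A}(V)$ is a smooth piece of dimension $\min\{n,2(n-r)\}-1$ whose second fundamental form is read off from the Hessian of the restricted logarithmic map. The role of the codimension being $r>1$ is that $V$ is cut out by at least two independent equations, and the interaction of the corresponding branches makes this second fundamental form indefinite along some boundary piece: there is a smooth $x_0\in\partial\mathscr{A}(V)$ and a tangent direction in which the boundary bends toward the complement, so that $\mathscr{A}(V)$ is locally strictly convex on the complement side. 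At such an $x_0$ no convex subset of $\mathbb{R}^n\setminus\mathscr{A}(V)$ can be tangent from the complement side, contradicting Step 2. For a hypersurface ($r=1$) this never occurs, the complement components being honestly convex, which is exactly why the Hept--Theobald phenomenon survives there and breaks here.

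\smallskip

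The main obstacle is Step 3: one must show that this saddle-type local geometry genuinely occurs for a generic variety of codimension $1<r<n$, and on an \emph{open} piece of $\partial\mathscr{A}(V)$ rather than at isolated degenerate points. This is precisely where the sharpened characterization of the topological boundary of hypersurface amoebas enters, since it identifies which critical values lie on the boundary and on which side the amoeba sits, and so lets one extract the sign of the curvature from the Jacobian and Hessian data supplied by the genericity hypothesis. The remaining ingredients---the convexity of hypersurface complement components and the passage to boundaries---are formal.
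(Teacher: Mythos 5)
Your Steps 1 and 2 are essentially sound and reproduce what the paper establishes in its Claim A: the complement of a finite intersection of hypersurface amoebas is a finite union of open convex sets, and every point of $\partial\mathscr{A}(V)$ is touched by (the closure of) one of them. (One slip: a supporting hyperplane of $C_j$ at $x_0$ confines $C_j$ to a half-space, not $\mathscr{A}(V)$ --- the amoeba may well meet both sides.) The genuine gap is Step 3, which you yourself flag: it is the entire content of the theorem, and nothing in your proposal proves that a boundary point with indefinite second fundamental form exists. The assertion that ``the interaction of the corresponding branches makes this second fundamental form indefinite along some boundary piece'' is precisely what needs proof, and the appeal to the boundary characterization of hypersurface amoebas (Theorem \ref{boundary Amoeba}) cannot supply it: that result describes which critical values of $\Log_{|V}$ lie on $\partial\mathscr{A}(V)$ when $V$ \emph{is} a hypersurface, and gives no curvature information about the amoeba of a codimension-$r$ variety. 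Worse, your local picture breaks down in the range $n<2r$: there $\mathscr{A}(V)$ has dimension $2(n-r)<n$, hence empty interior, so its topological boundary is the whole amoeba (of dimension $2(n-r)$, not $2(n-r)-1$ as you state), and it can be a surface without boundary at all (the non-real line in $(\mathbb{C}^*)^3$ of Figure 2), so there is no hypersurface-like boundary piece whose bending one could examine.

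The paper closes this gap in the opposite way: rather than exhibiting an intrinsic local failure of convexity, it \emph{accepts} the local convexity forced by the finite-basis hypothesis and derives a global topological contradiction. For $n\geq 2r$, Lemma \ref{lemmaA} upgrades your pointwise Step 2 to an open statement --- a neighborhood of each point of $\partial\mathscr{A}(V)$ lies in the boundary of a single hypersurface amoeba --- so the unique unbounded component of $\partial\mathscr{A}(V)$ is locally, hence globally, convex; this is incompatible with Henriques' theorem that $\mathscr{A}^c(V)$ is $(r-1)$-convex with nontrivial $(r-1)$-homology in its unbounded component, the nontriviality coming from Bergman's theorem that the logarithmic limit set has dimension $n-r-1$. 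For $n<2r$, Lemma \ref{lemmaB} places a neighborhood of a smooth point $x$ of the (nowhere dense) amoeba inside $\partial\mathscr{A}(V_{g_s})$, hence on one side of a hyperplane $\mathscr{H}_x$; then a small $(r-1)$-sphere centered at $x$ in an $r$-plane $\mathscr{L}_x\subset\mathscr{H}_x$ is nonzero in $H_{r-1}(\mathscr{L}_x\cap\mathscr{A}^c(V))$ yet bounds an $r$-chain in $\mathscr{A}^c(V)$, contradicting $(r-1)$-convexity. If you want to salvage Step 3, replace the unproved Hessian claim by this homological obstruction: it is Henriques' higher convexity of amoeba complements, not a curvature computation, that rules out the local convexity your Steps 1--2 force.
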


 \vspace{0.2cm}

This paper is organized as follows. In Section 2, we prove our main Theorem \ref{Amoeba Basis}, in Section 3 we describe the example of a generic line in the space. In Section 4, we give a geometric characterization of the topological  boundary of hypersurface amoebas and prove Theorem \ref{boundary Amoeba} which refines the main theorem of  F. Schroeter and T. de Wolff  in \cite{SW-13}.

\vspace{0.3cm}


\section{Non  existence of finite amoeba basis in general }

Let $V\subset (\mathbb{C}^*)^n$ be an algebraic variety of codimension $r$ with defining ideal $\mathcal{I}(V)$ and amoeba $\mathscr{A}(V)$. It was shown by Purbhoo \cite{P-08} (a short proof for both amoebas and coamoebas can be found in \cite{NP-11}) that the amoeba $\mathscr{A}(V)$ of $V$ is equal to the intersection of all hypersurface amoebas with defining polynomial in the ideal $\mathcal{I}(V)$, i.e.,
$$
\mathscr{A}(V) = \bigcap_{f\in\mathcal{I}(V)} \mathscr{A}(V_f),
$$
where $V_f$ is the hypersurface with defining polynomial $f$. One naturally ask: Is the amoeba $\mathscr{A}(V)$ the intersection of a finite number of hypersurface amoebas?  In \cite{P-08}, Purbhoo expects a negative answer to this question  in general, but he does not give a formal proof. We
 give a negative answer to this question   when the codimension of our variety is different than
1 and $n$. In \cite{SW-13},  Schroeter and de Wolff give a positive answer to this question when $V$ is the zero-dimensional solution set of a generic  linear system of $n$ equations.
\vspace{0.3cm}

Let us fix some notation and definitions. Let $V\subset (\mathbb{C}^*)^n$ be an algebraic variety of codimension $1<r<n$ with defining ideal $\mathcal{I}(V)$ and amoeba $\mathscr{A}(V)$.
To prove Theorem \ref{Amoeba Basis}, we will deal  with two cases, the one where $\min\{ n, 2(n-r)\} = n$ and the other case  where $\min\{ n, 2(n-r)\} = 2(n-r)$. Namely, the cases where the dimension of the ambient space is less or equal to twice the dimension of $V$, and the case where the dimension of the ambient space is strictly greater than  twice the dimension of $V$. In the first case, the amoeba $\mathscr{A}(V)$ has dimension $n$, and then it  necessarily  has a boundary. In the second case, it may be without boundary, as we will see in some examples.

%
%
%
%
%

\vspace{0.2cm}

Let $V\subset (\mathbb{C}^*)^n$ be a generic algebraic variety of codimension $r$ such that   $n\leq 2(n-r)$. Moreover, assume that the set   of polynomials $\{ g_i\}_{i=1}^l$ is an amoeba basis  of   $\mathscr{A}(V)$.

\vspace{0.2cm}

\noindent {\it Claim} A: With the above hypotheses, let   $x$ be a point in $\partial\mathscr{A}(V)$.  Then   there exists a vector direction $v$  such that $(x+\varepsilon v)\notin \mathscr{A}(V)$ for all small  positive real numbers $\varepsilon$. 

\vspace{0.2cm}

\noindent {\it Proof of Claim} A: First of all, the set of vector direction around $x$ can be identified to the $(n-1)$-dimensional sphere. Assume on the contrary that for all vector direction $v$  and any $\eta >0$ there exists 
$\varepsilon$  with $0<\varepsilon \leq \eta$ and  $(x+\varepsilon v)\in \mathscr{A}(V)$.  
This means that $x\in \mathring{\mathscr{A}}(V_{g_i})$ for all   $1\leq i\leq l$, where $\mathring{\mathscr{A}}(V_{g_i})$ denotes the interior of the hypersurface amoeba $\mathscr{A}(V_{g_i})$. In fact, if there exists  $s$ with $1\leq s\leq l$  such that $x \in \partial\mathscr{A}(V_{g_i})$, then by the convexity of the complement components of the hypersurface amoeba $\mathscr{A}(V_{g_i})$, we can find a vector direction $w$ such that $(x+ \nu w)$ is outside $\mathscr{A}(V_{g_i})$ for all small positive numbers $\nu$. Hence,  $(x +\nu w)$ is outside $\mathscr{A}(V)$ for all small positive number $\nu$. This is in contradiction with our hypotheses.
Let $d_i$ be the distance between $x$ and the boundary of $\mathscr{A}(V_{g_i})$, and $\rho =\min\{ d_i\}_{i=1}^l$. It is claire that the ball of center $x$ and radius $\rho$ is contained in the amoeba $\mathscr{A}(V)$ (because it is contained in all the hypersurface amoebas $\mathscr{A}(V_{g_i})$). This contradict the fact that $x$ is in the boundary of the amoeba $\mathscr{A}(V)$. Hence, there exists a vector direction $v$ such that $(x+\varepsilon v)\notin \mathscr{A}(V)$ for all small  positive real numbers $\varepsilon$. We can remark that for any point $x\in\partial\mathscr{A}(V)$ there exists an open subset of unit vector  directions for which the property of {\it Claim} A is true. Indeed,  if there exists a vector direction $v$ such that $(x+\varepsilon v)\notin \mathscr{A}(V)$ for all small  positive real numbers $\varepsilon$,  and as our amoeba is the intersection of a finite number of hypersurface amoeba, then there exists $s$ with $1\leq s\leq l$ such that $x\in \partial \mathscr{A}(V_{g_{s}})$. Since any complement component of the complement of $\mathscr{A}(V_{g_{s}})$ is convex, then there exists an open neighborhood $\mathcal {V}_v$ of unit vector directions of $v$ such that $(x+\varepsilon \mathcal {V}_v)\cap \mathscr{A}(V_{g_s})$ is empty for all small  positive real number $\varepsilon$. Hence, $(x+\varepsilon \mathcal {V}_v)\cap \mathscr{A}(V)$ is empty for all small  positive real numbers $\varepsilon$.


\vspace{0.3cm}

 We will use  the following definitions: 

\vspace{0.2cm}

\begin{definition}\label{k-convexity}
An analytic subset  $\mathscr{S}$ in $\mathbb{R}^n$ of codimension 1 is said to be locally convex if and only if for any point $x\in \mathscr{S}$  there exists  an open $n$-dimensional ball $B(x,\rho )\subset \mathbb{R}^n$ of radius $\rho$ and center $x$ and a connected component of $B(x,\rho )\setminus \mathscr{S}$ which is convex. As the convexity is a local property, this means that $\mathscr{S}$ is the boundary of a convex subset in $\mathbb{R}^n$.

\end{definition}

Recall that an $n$-dimensional  subset $\mathscr{V}\subset \mathbb{R}^n$ is said to be  locally convex if for any point $x\in \mathscr{V}$ there exists  an $n$-dimensional ball $B(x, \mu )$ of center $x$ and radius $\mu$ contained in $\mathscr{V}$.

\vspace{0.2cm}

A subset  $X$ of $\mathbb{R}^n$ is convex  if  for any affine  line $\pi$ in
$\mathbb{R}^n$, the intersection $X\cap\, \pi$ has at most one connected component. 
In other words, the intersection  $X\cap\,\pi$ contains all  intervals with boundary in
$X\cap\,\pi$.
If the  points of $X$ are viewed as  $0$-cycles, then the convexity of $X$  means that  if
$a$ and $b$  are two   $0$-cycles  homologous in $X$, then they are also homologous in
$X\cap\,\pi$ where $\pi$ is the line containing  $a$ and $b$.  
Write $\tilde{H}_{*}(X, \Z)$ for the reduced integral homology of a
space $X$ with integral coefficients.
This is the kernel of the map $\deg\colon H_*(X,\Z)\to H_*({\rm pt},\Z)$ induced by the
map $X\to{\rm pt}$ to a point.

\begin{definition}
 A subset  $X$ of a vector space $V$ is $k$-{\em convex} if for any affine $(k+1)$-plane
 $\pi$, the maps $\tilde{H}_k(\pi\cap X, \mathbb{Z})\rightarrow \tilde{H}_k (X,\mathbb{Z})$
 induced by the inclusions are injective.  
\end{definition}

 This global statement generalizing convexity was found by Andr\'e Henriques~\cite{H-03}.  Moreover, Henriques
 showed that the complement of the amoeba of a codimension $r$ variety is weakly  $(r-1)$-convex. Namely,  a non-negative $(r-1)$-cycle non homologue to zero in the intersection of a $r$-plane with the complement of the amoeba is also an  $(r-1)$-cycle non homologue to zero in the complement of the amoeba itself (see  Theorem 4.1 \cite{H-03}).

\vspace{0.1cm}

\begin{lemma}\label{lemmaA}
Let $V\subset (\mathbb{C}^*)^n$ be a generic algebraic variety of codimension $r$ such that   $n\leq 2(n-r)$. Assume there exists a finite number of polynomials $\{ g_i\}_{i=1}^l$ such that 
$
\mathscr{A}(V) = \bigcap_{i=1}^l \mathscr{A}(V_{g_i}).
$
Then, for any  point $x\in \partial\mathscr{A}(V)$, there exist a connected open neighborhood $U_x\subset  \partial\mathscr{A}(V)$ of $x$,  $g_s$ with $1\leq s \leq l$, and a connected component $\mathscr{C}$ of $\partial\mathscr{A}(V_{g_s})$ such that $U_x\subset \mathscr{C}$.
\end{lemma}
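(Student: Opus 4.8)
The plan is to localize the finite intersection near $x$, keep only the hypersurfaces that are responsible for the boundary there, and then use the convexity of the complement components of hypersurface amoebas together with \emph{Claim} A to single out one of them.

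First I would fix $x\in\partial\mathscr{A}(V)$ and set $S(x)=\{\,i : x\in\partial\mathscr{A}(V_{g_i})\,\}$. For every $i\notin S(x)$ we have $x\in\mathring{\mathscr{A}}(V_{g_i})$, so a small ball $B(x,\delta)$ is contained in $\mathscr{A}(V_{g_i})$ and these hypersurfaces do not contribute to the boundary near $x$; consequently $\mathscr{A}(V)\cap B(x,\delta)=\bigcap_{i\in S(x)}\mathscr{A}(V_{g_i})\cap B(x,\delta)$. By \emph{Claim} A the set $S(x)$ is nonempty: there are a direction $v$ and an index $s\in S(x)$, together with an open cone $\mathcal V_v$ of directions at $x$, such that $(x+\varepsilon\mathcal V_v)\cap\mathscr{A}(V_{g_s})=\varnothing$ for all small $\varepsilon>0$; since $\mathscr{A}(V)\subseteq\mathscr{A}(V_{g_s})$ these directions also escape $\mathscr{A}(V)$. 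Let $E$ be the connected component of the complement of $\mathscr{A}(V_{g_s})$ whose closure contains $x$; it is convex by the theorem of Gelfand--Kapranov--Zelevinsky (the convexity already invoked in \emph{Claim} A), and I set $\mathscr{C}=\partial E$, the connected component of $\partial\mathscr{A}(V_{g_s})$ through $x$.

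Next I would show that all points of $\partial\mathscr{A}(V)$ close enough to $x$ lie on $\mathscr{C}$. Since the components of $\partial\mathscr{A}(V_{g_s})$ are closed, pairwise disjoint, and locally finite, and $x$ belongs only to $\mathscr{C}$, there is $\delta>0$ with $\partial\mathscr{A}(V_{g_s})\cap B(x,\delta)\subset\mathscr{C}$. Hence a point $y\in\partial\mathscr{A}(V)\cap B(x,\delta)$ that happens to lie on $\partial\mathscr{A}(V_{g_s})$ automatically lies on $\mathscr{C}$. The remaining possibility is that $y$ lies in the interior of $\mathscr{A}(V_{g_s})$ but on $\partial\mathscr{A}(V_{g_j})$ for some $j\in S(x)\setminus\{s\}$; this is exactly the case I must exclude, and it is the heart of the argument.

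The hard part will be excluding this branching, and this is where the genericity of $V$ is essential. The boundary $\partial\mathscr{A}(V)$ is contained in the contour of $V$, namely the image under the logarithmic map of the critical locus of its restriction to $V$; when $V$ is generic and $n\leq 2(n-r)$, the maximal rank hypothesis makes this critical image, off a subset of dimension at most $n-2$, a smooth embedded real-analytic hypersurface. On the complement of that thin set only one component $E_i$ can be active, so $y\in\mathscr{C}$ there; and since $\mathscr{C}$ is closed while the exceptional set is nowhere dense in the $(n-1)$-dimensional set $\partial\mathscr{A}(V)$, the active sheet cannot leave $\mathscr{C}$ on a whole neighborhood of $x$ in $\partial\mathscr{A}(V)$. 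Taking $U_x$ to be the connected component of $x$ in $\partial\mathscr{A}(V)\cap B(x,\delta)$ then yields the desired connected open neighborhood with $U_x\subset\mathscr{C}$. I expect the genuinely delicate point to be making this \emph{no second sheet} conclusion hold uniformly at every boundary point, including the non-generic ones where the contour may be singular, rather than only on a dense open subset of $\partial\mathscr{A}(V)$.
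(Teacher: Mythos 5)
Your opening moves are sound and parallel the paper's: localizing to $S(x)=\{i:x\in\partial\mathscr{A}(V_{g_i})\}$, extracting $s$ and an open cone of escaping directions via \emph{Claim} A, and fixing the component $\mathscr{C}$ of $\partial\mathscr{A}(V_{g_s})$ through $x$. But the step you correctly identify as the heart of the argument --- excluding boundary points $y$ arbitrarily close to $x$ with $y\in\mathring{\mathscr{A}}(V_{g_s})\cap\partial\mathscr{A}(V_{g_j})$, $j\neq s$ --- is not actually proved. You appeal to genericity to assert that off a set of dimension at most $n-2$ the contour $\Log(Critp(\Log_{|V}))$ is a smooth embedded hypersurface, and that on its smooth locus ``only one component $E_i$ can be active.'' Neither claim is available. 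In this paper \emph{generic} means only that there exists a single point of $V$ where $\Jac(\Log_{|V})$ has maximal rank; this fixes the dimension of $\mathscr{A}(V)$ but gives no control whatsoever on the singular locus of the contour. And even at a point where $\partial\mathscr{A}(V)$ is a smooth hypersurface germ, nothing prevents that germ from lying simultaneously in $\partial\mathscr{A}(V_{g_s})$ and $\partial\mathscr{A}(V_{g_j})$, or from switching from a sheet of one to a sheet of the other across a codimension-two set: uniqueness of the ``active'' hypersurface near $x$ is essentially what the lemma asserts, so invoking it there is circular. Your final ``$\mathscr{C}$ is closed and the exceptional set is nowhere dense'' step presupposes that the nearby boundary points lying on $\mathscr{C}$ are dense in $\partial\mathscr{A}(V)$ near $x$, which is exactly the unproven point --- as your own last sentence concedes.

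What is missing is the paper's actual mechanism, which is topological rather than smoothness-based. Arguing by contradiction, the paper uses induction on $l$ to cover a punctured neighborhood of $x$ in $\partial\mathscr{A}(V)$ by at most $l$ pieces, each contained in some $\partial\mathscr{A}(V_{g_u})$; since each piece is locally the boundary of a convex complement component and $l$ is finite, one obtains a ball $B(x,\nu)$ and a hyperplane $\mathscr{H}_x$ through $x$ with $B(x,\nu)\cap\mathscr{A}(V)$ on one side of $\mathscr{H}_x$ --- a local supporting hyperplane. This contradicts Henriques' theorem that the complement of $\mathscr{A}(V)$ is weakly $(r-1)$-convex, and it is precisely here that the hypotheses $1<r$ and $n\leq 2(n-r)$ do their work. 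Your proposal never invokes the higher convexity of the amoeba complement, and without some such global input the conclusion cannot follow: everything else you use (convexity of the complement components of each $\mathscr{A}(V_{g_i})$, finiteness of $l$, the structure of \emph{Claim} A) is perfectly consistent with $\partial\mathscr{A}(V)$ forming a ``corner'' where two hypersurface sheets meet at $x$. So the gap is not a fixable technicality in your smoothness argument; it is the absence of the lemma's decisive ingredient.
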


\begin{proof}

 As the variety $V$ is generic  and its codimension $r$ satisfies the inequality $n \geq 2r$, then $\mathscr{A}(V)$ necessarily has  a boundary of  dimension $n-1$, which is the same dimension as the boundaries of all the hypersurface amoebas $\mathscr{A}(V_{g_i})$. 
 As the point $x$ is in the boundary $\partial\mathscr{A}(V)$,  by {\it Claim} A, we know that there is a vector direction (which we can assume unit) such that $(x+\varepsilon v) \cap \mathscr{A}(V)$ is empty for all small  positive real numbers $\varepsilon$. By the remark made in the same claim, there exists $s$ with $1\leq s\leq l$ such that $x\in \partial \mathscr{A}(V_{g_{s}})$. We claim that there exists an open neighborhood $U_x\subset \partial\mathscr{A}(V)$ of $x$ such that $U_x$ is also contained in the boundary of $\mathscr{A}(V_{g_{s}})$. 
Assume on the contrary that for any open neighborhood $U_x$ of $x$ in $\partial\mathscr{A}(V)$,  the set $U_x$ is not contained in $\partial \mathscr{A}(V_{g_{s}})$ (i.e., $U_x$ intersect the interior of $\mathscr{A}(V_{g_s})$). 
Then  for any point $y$ close to $x$ and contained  in $ \mathring{\mathscr{A}}(V_{g_s})\cap \partial\mathscr{A}(V)$, there exists
an open $n$-dimensional ball $B(y, \rho_y )\subset \mathbb{R}^n$  with center $y$  such that 
$$
B(y, \rho_y )\cap \mathscr{A}(V) = B(y, \rho_y )\cap (\bigcap_{i\in \{1,\ldots ,l\}\setminus s}\mathscr{A}(V_{g_i})).
$$ 
Namely, the set $\{ g_1,\ldots ,\hat{g_s},\ldots , g_l\}$  is a local basis of $\mathscr{A}(V)$ at $y$. Now, by the same reasoning as in {\it Claim} A, there exists $u$ with $1\leq u\leq l$ and $u\ne s$ such that $y\in \partial\mathscr{A}(V_{g_u})$. Using induction on $l$ (more precisely, induction on the number of hypersurface amoebas), we  can assume that for any point  $y\in \partial\mathscr{A}(V)$ close to $x$
there exists an open neighborhood $\mathcal{U}_{y}$ of $y$ in $\partial\mathscr{A}(V)$ with $\mathcal{U}_{y}\subset \partial\mathscr{A}(V_{g_u})$ for some $u\leq l$ and $u\ne s$.
Indeed,  if this property is not satisfied  for some $y$, by the same reasoning done for the point $x$ we can drop the number of local basis functions by one until we arrive to a hypersurface amoeba. It means that there exists an open neighborhood $\mathcal{W}_x$ of $x$ in $\partial\mathscr{A}(V)$ (may be smaller than $U_x$ ) such that $\mathcal{W}_x\setminus \{ x\}$ is covered by at most $l$ open subsets where each of them is contained in a hypersurface amoeba (their number cannot exceed $l$ because of the convexity of a hypersurface amoeba complement).
As $\partial\mathscr{A}(V_{g_u})$ are convex in the sense of Definition \ref{k-convexity}, and $l$ is finite, there exists
an open $n$-dimensional ball $B(x, \nu )\subset \mathbb{R}^n$  with center $x$  and a hyperplane $\mathscr{H}_x$ containing $x$ such that $B(x, \nu )\cap \mathscr{A}(V)$ is contained in only one side of  $\mathscr{H}_x$. This contradict the fact that the complement of the amoeba $\mathscr{A}(V)$ is $(r-1)$-convex.  Hence, there exists  an open neighborhood $U_x\subset \partial\mathscr{A}(V)$ of $x$ such that $U_x$ is also contained in the boundary of $\mathscr{A}(V_{g_{s}})$ for some $s$.

\end{proof}

 \vspace{0.2cm} 

\noindent {\it Proof of Theorem \ref{Amoeba Basis} for $n \geq 2r$.} With the same notation as above, assume that $r>1$.  Then  $\partial\mathscr{A}(V)$ has only one noncompact connected component $\mathscr{C}(V)$ (i.e., unbounded connected component). Indeed, by Bergman \cite{B-71} (see also Bieri and Groves  \cite{BG-84})
the  logarithmic limit set of an algebraic variety of codimension $r$
is  $(n-r-1)$-dimensional (i.e., its dimension is strictly less than $n-2$). So, the complement of the amoeba $\mathscr{A}(V)$  in $\mathbb{R}^n$ has only one noncompact connected component. Assume $x$ is contained in the noncompact connected component  $\mathscr{C}(V)$ of the boundary of $\mathscr{A}(V)$. Lemma \ref{lemmaA} shows that $\mathscr{C}(V)$ is locally convex viewed as the graph of a function (this is a fact of the convexity of the complement components of  hypersurface amoebas).
As the convexity  is a local property, i.e., a subset of a vector space is globally convex  if and only if it is locally convex, this implies that $\mathscr{C}(V)$ is globally convex.
This contradict the fact that the logarithmic limit set of an algebraic variety of codimension $r$ is $(n-r-1)$-dimensional. Also, this contradict the higher convexity of the complement  components of the amoeba $\mathscr{A}(V)$. In fact, if a subset of $\mathbb{R}^n$ is $k$-convex with nontrivial $k$-homology, then it can never be convex. Namely, we know that the homology of degree $(r-1)$ of the unbounded complement of the amoeba $\mathscr{A}(V)$ is nontrivial. This is a consequence of the injection of the $(r-1)$-homology of the complement of its logarithmic set in the sphere $S^{n-1}$ into the $(r-1)$-homology of the complement of the amoeba.

\vspace{0.2cm}

 \newpage
 
 \begin{lemma}\label{lemmaB}
Let $V\subset (\mathbb{C}^*)^n$ be a generic algebraic variety of codimension $r$ such that   $n > 2(n-r)$. Assume there exists a finite number of polynomials $\{ g_i\}_{i=1}^l$ such that 
$
\mathscr{A}(V) = \bigcap_{i=1}^l\mathscr{A}(V_{g_i}).
$
Then,  the complement of the  amoeba $\mathscr{A}(V)$ contains a component which is not $(r-1)$-convex.  
\end{lemma}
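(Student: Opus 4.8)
The plan is to show that, under the standing hypothesis $\mathscr{A}(V)=\bigcap_{i=1}^{l}\mathscr{A}(V_{g_i})$, some connected component of the complement $X:=\mathbb{R}^n\setminus\mathscr{A}(V)$ fails to be $(r-1)$-convex. Since the complement of a codimension $r$ amoeba is weakly $(r-1)$-convex by Henriques' theorem, such a component is exactly what the lemma asserts, so this is what I would establish.

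First I would record the elementary consequences of $n>2(n-r)$. Here $\dim\mathscr{A}(V)=2(n-r)$, so $\mathscr{A}(V)$ has codimension $c:=2r-n\geq 1$ in $\mathbb{R}^n$ and empty interior; hence $\bigcap_{i=1}^{l}\mathring{\mathscr{A}}(V_{g_i})\subseteq\mathring{\mathscr{A}}(V)=\emptyset$, and every point of $\mathscr{A}(V)$ lies on some $\partial\mathscr{A}(V_{g_i})$. Writing each $C_i:=\mathbb{R}^n\setminus\mathscr{A}(V_{g_i})$ as the disjoint union of its convex components, we obtain a \emph{finite} cover of $X=\bigcup_{i=1}^{l}C_i$ by convex open sets whose nonempty finite intersections are convex, hence contractible; the nerve lemma then yields a homotopy equivalence $X\simeq N$, where $N$ is the finite nerve of this cover. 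Thus finiteness of the basis forces $X$ to have the homotopy type of a finite complex governed by the combinatorics of the convex walls. I would also recall, exactly as in the case $n\geq 2r$, that $\tilde{H}_{r-1}(X,\mathbb{Z})\neq 0$, via the injection of $\tilde{H}_{r-1}\!\big(S^{n-1}\setminus\mathscr{L}^{\infty}(V)\big)$, the latter being nonzero because the logarithmic limit set $\mathscr{L}^{\infty}(V)$ has dimension $n-r-1$.

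Next I would produce the violating cycle. Using the thinness of $\mathscr{A}(V)$ and the bounded dimension $n-r-1$ of $\mathscr{L}^{\infty}(V)$, I would choose an affine $r$-plane $\pi$ for which $\pi\cap\mathscr{A}(V)$ has a compact, contractible component $K$ (of dimension $\leq n-r$) isolated from the rest of the slice; taking $z$ to be a small $(r-1)$-sphere in $\pi$ linking $K$ gives a non-negative cycle that is non-nullhomologous in $\pi\cap X$, since $K\subset\pi\cap\mathscr{A}(V)$ obstructs any filling inside the slice. Because $K$ is contractible, Alexander duality shows $z$ already bounds in $\mathbb{R}^n\setminus K$; what remains is to fill $z$ by an $r$-chain that stays in $X$, i.e.\ one avoiding all of $\mathscr{A}(V)$.

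The hard part is precisely this filling step, and it is where finiteness and convexity must be used in an essential way: a dimension count shows that a \emph{generic} $r$-chain cannot avoid the $2(n-r)$-dimensional set $\mathscr{A}(V)$, so the bounding chain has to be routed deliberately through the convex regions $C_i$ that sandwich the thin amoeba near $K$. Concretely, I would argue that one of the $C_i$ lies to a single transverse side of $K$ and is, by its convexity together with the local flatness of $\mathscr{A}(V)$ coming from $\bigcap_i\mathring{\mathscr{A}}(V_{g_i})=\emptyset$, large enough to cap $z$ off inside $X$; equivalently, under $X\simeq N$ the class $[z]$ maps to zero in $\tilde{H}_{r-1}(N)$. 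Carrying this out rigorously --- certifying that the finite combinatorial homotopy type $N$ cannot support the slice-produced class $[z]$ --- is the main obstacle, and I expect it to require a separate treatment of the borderline case $c=1$ (that is, $n=2r-1$), in which $\pi\cap\mathscr{A}(V)$ is a genuine separating hypersurface of the slice and $z$ is the boundary of a bounded complement region of that slice.
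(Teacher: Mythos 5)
You have the right target --- a positive $(r-1)$-cycle lying in an affine $r$-plane, non-nullhomologous in the slice of the complement yet nullhomologous in all of $\mathscr{A}^c(V)=\mathbb{R}^n\setminus\mathscr{A}(V)$ --- but your proof stops exactly at the step that constitutes the entire content of the lemma, and the machinery you set up cannot supply it. The nerve-lemma observation ($X\simeq N$ for a finite nerve $N$) gives no information about whether your particular slice class $[z]$ dies in $X$: as you yourself recall, $\tilde{H}_{r-1}(X,\mathbb{Z})\neq 0$, so finiteness of the homotopy type can never by itself force $[z]\mapsto 0$, and the sentence claiming that some convex component $C_i$ is ``large enough to cap $z$ off'' is an assertion, not an argument. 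Alexander duality in $\mathbb{R}^n\setminus K$ is likewise insufficient, since the filling must avoid the whole amoeba and, as your own dimension count shows, a generic $r$-chain cannot do so. In addition, your starting point --- the existence of an affine $r$-plane $\pi$ whose intersection with $\mathscr{A}(V)$ has a compact, contractible component $K$ isolated in the slice --- is itself unproven, and nothing in the hypotheses obviously provides it.

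The idea that closes the gap (and is the heart of the paper's proof) uses the finite basis hypothesis \emph{before} choosing the plane, not after. Take a smooth point $x$ of the $2(n-r)$-dimensional amoeba and a small neighborhood $U_x\subset\mathscr{A}(V)$ of $x$; pushing $U_x$ in a direction $v$ transverse to the tangent space of $\mathscr{A}(V)$ at $x$ leaves $\mathscr{A}(V)$, and then, by the reasoning of Claim A and Lemma~\ref{lemmaA} applied to the finite intersection $\mathscr{A}(V)=\bigcap_{i=1}^{l}\mathscr{A}(V_{g_i})$, there is a single index $s$ with $(U_x+\varepsilon v)\cap\mathscr{A}(V_{g_s})=\emptyset$ for all small $\varepsilon>0$; hence $U_x\subset\partial\mathscr{A}(V_{g_s})$. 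Local convexity of the complement components of the hypersurface amoeba $\mathscr{A}(V_{g_s})$ now produces a supporting hyperplane $\mathscr{H}_x$ through $x$: inside a small ball, $\partial\mathscr{A}(V_{g_s})$, and with it the amoeba piece $U_x$, lies on one side of $\mathscr{H}_x$. Choosing an $r$-plane $\mathscr{L}_x\subset\mathscr{H}_x$ through $x$ and letting $\gamma$ be a small $(r-1)$-sphere in $\mathscr{L}_x$ centered at $x$, the class of $\gamma$ is nonzero in $H_{r-1}(\mathscr{L}_x\cap\mathscr{A}^c(V))$ because its filling disk in $\mathscr{L}_x$ contains the amoeba point $x$, while $\gamma$ bounds in $\mathscr{A}^c(V)$ because that disk can be pushed off $\mathscr{H}_x$ to the amoeba-free side. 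The supporting hyperplane thus furnishes, in one stroke, the cycle, its nontriviality in the slice, and the global filling; this is precisely the step your proposal is missing, and with it no separate treatment of the borderline case $n=2r-1$ is needed.
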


\begin{proof}
 In this case, the amoeba  $\mathscr{A}(V)$   may have or may not have  a  boundary. The variety $V$ is generic,  and its codimension $r$ satisfies the inequality $n < 2r$, means that  $ \mathscr{A}(V)$  is $2(n-r)$-dimensional  i.e., its dimension is strictly less than the dimension of the ambient space $\mathbb{R}^n$.
 Let $x$ be a  point in $\mathscr{A}(V)$, and for simplicity assume that $x$ is a smooth point of the amoeba. The amoeba   $\mathscr{A}(V)$ is $2(n-r)$-dimensional, and $2(n-r) < n$ implies that for a small open neighborhood $U_x$ of $x$ in $\mathscr{A}(V)$ there exists a  unit vector direction $v$ not in the tangent space of $\mathscr{A}(V)$ at $x$ such that $(U_x+\varepsilon v)\cap \mathscr{A}(V)$ is empty for all small positive numbers $\varepsilon$. The same reasoning as in Lemma \ref{lemmaA}  shows that there exists $s$ with $1\leq s\leq l$ such that $(U_x+\varepsilon v)\cap \mathscr{A}(V_{g_s})$ is empty for all small positive numbers $\varepsilon$. Hence, $U_x$ is contained in the boundary of the hypersurface amoeba $\mathscr{A}(V_{g_s})$. As the  $\partial\mathscr{A}(V_{g_s})$ is locally convex in the sense of Definition \ref{k-convexity}, then there exists a hyperplane
   $\mathscr{H}_x\subset \mathbb{R}^n$  passing throughout  the  point $x$ such that for a small ball $ B(x,\rho )\subset \mathbb{R}^n$ the set   $\partial\mathscr{A}(V_{g_s})\cap B(x,\rho )$ is contained in only one side of $\mathscr{H}_x$.
   Let $\mathscr{L}_x\subset \mathscr{H}_x$ be an $r$-dimensional plane containing $x$. Let $\mathscr{A}^c(V) := \mathbb{R}^n\setminus \mathscr{A}(V)$ i.e.,  the complement of the amoeba in $\mathbb{R}^n$. 
Now it is claire that there exists an $(r-1)$-cycle $\gamma$ in $\mathscr{L}_x$ (which we can assume positive in the sense of Henriques's Definition 3.3 in  \cite{H-03}) non homologue to zero in $\mathscr{L}_x\cap \mathscr{A}^c(V)$. In fact, take a small  $(r-1)$-dimensional sphere in $\mathscr{L}_x$ centered at $x$. As $r<n$  (i.e., $V$ is not a set of points),
the cycle $\gamma$ bounds an $r$-chain  in $\mathscr{A}^c(V)$ (because a small neighborhood of $x$ in the amoeba $\mathscr{A}(V)$ is  contained in only one side of the hyperplane $\mathscr{H}_x$). This means that the homology  class of $\gamma$ in $H_{r-1}(\mathscr{A}^c(V))$ is trivial. This contradict the higher convexity of the complement of the amoeba $\mathscr{A}(V)$ (see \cite{H-03}).
 
\end{proof} 
 
 \vspace{0.2cm} 

\noindent   If the codimension  $r$ of the variety  $V$ satisfies $n> 2(n-r)$, then Theorem \ref{Amoeba Basis} is a consequence of Lemma \ref{lemmaB}. 

\begin{remark} ${}$ 

\begin{itemize}
\item[(a)]\,
Let  $\{ g_j\}_{j=1}^r$  be  a generator set   of the defining  ideal $\mathcal{I}(V)$ of an algebraic variety $V$. As a consequence of the  proof of Theorem 1.1 in \cite{NP-11}, the degree of the  polynomials $f\in \mathcal{I}(V)$ such that $\mathscr{A}(V) =\bigcap_{f\in\mathcal{I}(V)}\mathscr{A}(V_{f})$ can always be bounded by $2 \max_{j=1}^r \{ \deg (g_j)\}$.
\item[(b)]\,  Using the higher convexity of coamoeba complements proved by Sottile and I in \cite{NS-13}, the statement  of Theorem \ref{Amoeba Basis} is valid if we replace amoebas by coamoebas.
\end{itemize}
 \end{remark}
 
\section{Example of a generic affine line in $(\mathbb{C}^*)^3$}

The amoeba of a generic line $L$ in $(\mathbb{C}^*)^3$ is a surface with Figure 1 or without boundary Figure 2 (it depends if it is real or not real, see \cite{NP-11} for more details).  By Lemma \ref{lemmaB}, if the amoeba $\mathscr{A}(L)$ of $L$ is the intersection of a finite  number of hypersurface amoeba, then it is locally a
  convex surface because it is locally contained in the boundary of a hypersurface amoeba  $\mathscr{A}(V_{g_s})$. Namely, if $x$ is a point in $\mathscr{A}(L)$, then there exists  $1\leq s\leq l$ and  an open neighborhood $U_x$ of $x$ in $\mathscr{A}(L)$ which is contained  in  $\partial\mathscr{A}(V_{g_s})$.   Hence,   the  convexity of the complement of the  hypersurface  amoeba $\mathscr{A}(V_{g_s})$ implies that   there exists a 1-cycle $\gamma\subset \mathscr{H}_x$ such that the class of $\gamma$ in $H_1(\mathscr{A}(L)^c\cap \mathscr{H}_x , \mathbb{Z})$ is different than zero, where $\mathscr{H}_x$ is a hyperplane in $\mathbb{R}^3$ which separate locally at $x$ the  boundary  $\partial\mathscr{A}(V_{g_s})$ of the amoeba $\mathscr{A}(V_{g_s})$ (i.e., locally in a small neighborhood of $x$, the boundary  $\partial\mathscr{A}(V_{g_s})$ is situated in only one side of $\mathscr{H}_x$).
  But   $\gamma$ bounds a topological disk in $\mathscr{A}(L)^c$. This contradict the 1-convexity of   $\mathscr{A}(L)^c$ in $\mathbb{R}^3$. Hence, the amoeba of a generic line in the space can never be the intersection of a finite number of hypersurface amoebas.


\begin{figure}[h!]
\begin{center}
\includegraphics[angle=0,width=0.74\textwidth]{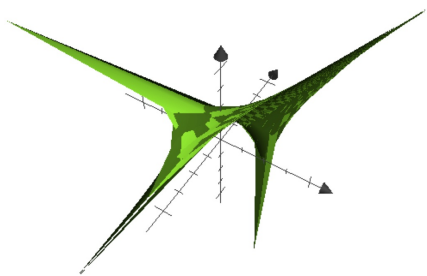}
\caption{The amoeba of the real line in $(\mathbb{C}^*)^3$ given by the parametrization 
$\rho (z)=(z,z+\frac{1}{2},z-\frac{3}{2})$.  In this case, the amoeba is topologically the closed disk without four points of its boundary.}
\label{c}
\end{center}
\end{figure}

\begin{figure}[h!]
\begin{center}
\includegraphics[angle=0,width=0.7\textwidth]{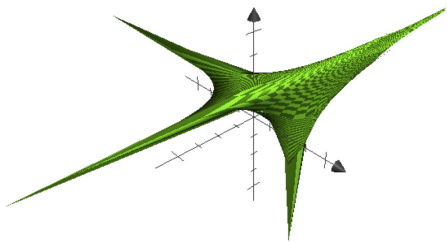}
\caption{the amoeba of non real line in $(\mathbb{C}^*)^3$ given by the parametrization $\rho (z)=(z,z+1,z-2i)$. In this case, the amoeba is topologically  the Riemann sphere without four points.}
\label{c}
\end{center}
\end{figure}


\section{Characterization of  hypersurface amoeba boundaries}

Given a smooth algebraic hypersurface $V\subset (\mathbb{C}^*)^n$,  F. Schroeter and T. de Wolff  give a characterization of the boundary of hypersurface amoeba $\mathscr{A}(V)$ up to singular points of the set of critical values of the logarithmic map restricted to $V$ (see Theorem 1.3 in \cite{SW-13}).  In this section, we will refine their theorem and give a  very short proof of it. Let us start by giving some definitions and notation. 

\vspace{0.2cm}

\noindent Let us denote by $\Log$ the coordinatewise logarithmic map, i.e., the map from the complex algebraic torus into $\mathbb{R}^n$ defined as follows:
\begin{eqnarray*}
\Log :&(\mathbb{C}^*)^n&\longrightarrow\,\,\, \mathbb{R}^n\\
&(z_1,\ldots ,z_n)&\longmapsto\,\,\, (\log |z_1|,\ldots , \log |z_n|).
\end{eqnarray*}

\vspace{0.2cm}

\noindent We denote by $Critp(\Log_{|V})$ (resp. $Critv(\Log_{|V})$) the set of critical points (resp. critical values) of the logarithmic map restricted to $V$. 

\vspace{0.2cm}

Let  $V\subset (\mathbb{C}^*)^n$ be  a complex algebraic hypersurface
defined by a polynomial $f$ and nowhere singular. The {\em logarithmic Gauss map} of the hypersurface $V$
is a rational map from  all $V$) to $\mathbb{CP}^{n-1}$ defined as follows:
 \begin{eqnarray*}
\gamma :&V&\longrightarrow\,\,\,\mathbb{CP}^{n-1}\\
&z&\longmapsto\,\,\, \gamma (z) =  [z_1\frac{\partial f}{\partial z_1}(z):\cdots :z_n\frac{\partial f}{\partial z_n}(z)],
\end{eqnarray*}

\noindent We have the following  commutative diagram:

\begin{equation}
\xymatrix{
V \ar[rr]^{\gamma}&& \mathbb{C}\mathbb{P}^{n-1}\cr
Critp(\Log_{|V})\ar[rr]^{\gamma_c}\ar[u]^{\cup}\ar[dr]_{\Log}&&\mathbb{R}\mathbb{P}^{n-1}\ar[u]_{\cup} \cr
&Critv(\Log_{|V})\ar[ru]_{g},
}\nonumber
\end{equation}
where $\cup$  denotes the natural inclusion,
$g$ is the usual Gauss map defined on the smooth part of 
$Critv(\Log_{|V})$,  and $\gamma_c = \gamma_{|Critp(\Log_{|V})}$ (i.e., the restriction of $\gamma$ to the set of  critical points $Critp(\Log_{|V})$ of the logarithmic map).

\begin{definition}\label{regular-point}
A point  $x$  in $Critv(\Log_{|V})$ is called regular if and only if  $\Log^{-1}(x)\cap V$ is contained in the set of regular points of the restriction of the logarithmic Gauss map to $Critp(\Log_{|V})$ (i.e., $\Log^{-1}(x)\cap V$ contains no critical point of $\gamma$).
\end{definition}

\begin{remark} ${}$
\begin{itemize}
\item[(i)]\,  A singular point of $Critv(\Log_{|V})$   can be a regular point in the sense of  Definition \ref{regular-point}. Moreover,  a non regular point  in the sense of Definition \ref{regular-point} is necessarily  a singular point of $Critv(\Log_{|V})$;
\item[(ii)]\,   The degree of the extension  of $\gamma$ to the compactification $\overline{V}$ of $V$ in the projective space $\mathbb{C}\mathbb{P}^{n}$ is equal to $n!\Vol (\Delta_f)$, i.e., the cardinality of $\gamma^{-1}(y)$  for a generic point $y$ is finite and equal to $n!\Vol (\Delta_f)$ (see \cite{M-00});
 \item[(iii)]\,  The  inverse image of a regular point $x\in Critv(\Log_{|V})$ by the logarithmic map is a finite number of points. But  the inverse image by the logarithmic map of a non regular point  can be of positive dimension (see the example of the hyperbola in Section 5). 
\item[(iv)]\,  In the case of plane curves, the logarithmic Gauss map $\overline{V}\rightarrow \mathbb{C}\mathbb{P}^{1}$
 is a branched covering where the  branching points are the points of
logarithmic inflection (in other words, inflection after taking the holomorphic logarithm); see \cite{M-00}.
\end{itemize}
\end{remark}

 \noindent Let  $x \in Critv(\Log_{|V})$ be a regular point  contained   in the image by the logarithmic map of the local holomorphic branches   $\mathscr{B}_1(x), \ldots , \mathscr{B}_s(x)$, and we denote by $\mathscr{C}_1(x), \ldots , \mathscr{C}_s(x)$  their corresponding  real branches of  critical values passing through  $x$.
Recall that  the $\mathscr{C}_i(x)$'s  are the image of the critical points inside the corresponding holomorphic branches  and    $\mathscr{C}_i(x)$ can be empty if the local branch $\mathscr{B}_i(x)$ is regular (i.e., does not intersect $Critp(\Log_{|V})$).
In  general, if $V$ is not smooth, it can happen that 
    a critical value branch $\mathscr{C}_i(x)$  has  dimension strictly less than $(n-1)$   and  then  $x$ is necessarily contained in the interior of the amoeba. So, throughout this section,  we assume that the dimension of $\mathscr{C}_i(x)$  is equal to $(n-1)$ for all $i$ (i.e.,  we can assume $V$ singular but we consider only the set of smooth points of $V$).  We denote by $v_i(x)$ the normal vector to $\mathscr{C}_i(x)$ (if it is nonempty) pointed inside the local amoeba $\mathscr{A}(\mathscr{B}_i(x))$ of $\mathscr{B}_i(x)$ (the existence of $v_i(x)$ is assured by the regularity of $x$). We have the following:

\begin{lemma}\label{lemmaC}
Let $V$ be a complex algebraic hypersurface. Let $x$ be a point in the boundary of the amoeba $\partial\mathscr{A}(V)$. then the set $(\Log^{-1}(x)\cap V)$ is contained in $Critp(\Log_{|V})$.
\end{lemma}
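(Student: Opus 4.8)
The plan is to argue by contradiction, exploiting that a non-critical point of $\Log_{|V}$ is a submersion point of $\Log_{|V}$ and therefore forces the amoeba to contain a full $n$-dimensional neighborhood of its image.

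First I would fix $x\in\partial\mathscr{A}(V)$ and suppose, towards a contradiction, that there is a point $p\in\Log^{-1}(x)\cap V$ with $p\notin Critp(\Log_{|V})$. By definition this means that $V$ is smooth at $p$ and the differential $d(\Log_{|V})_p$ has maximal rank. Since $V$ is a hypersurface, $\dim_{\mathbb{C}}V=n-1$, so $\dim_{\mathbb{R}}V=2(n-1)\ge n$ for $n\ge 2$ (the case $n=1$ being trivial, as a $0$-dimensional source makes every point critical). Hence the maximal rank equals $\min\{2(n-1),n\}=n$, so $d(\Log_{|V})_p$ is surjective and $\Log_{|V}$ is a submersion at $p$.

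Next I would invoke the open mapping property of submersions: there is an open neighborhood $W$ of $p$ in the smooth locus of $V$ such that $\Log(W)$ is an open neighborhood of $x$ in $\mathbb{R}^n$. As $\Log(W)\subset\mathscr{A}(V)$, this places $x$ in the interior $\mathring{\mathscr{A}}(V)$, contradicting $x\in\partial\mathscr{A}(V)$. Consequently no point of $\Log^{-1}(x)\cap V$ can be a regular point of $\Log_{|V}$, that is $\Log^{-1}(x)\cap V\subset Critp(\Log_{|V})$, which is the assertion.

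The argument is insensitive to the dimension of the fiber $\Log^{-1}(x)\cap V$ (which may be positive, as the hyperbola example illustrates): a single regular point in the fiber would already produce an interior point, so \emph{every} point of the fiber must be critical. The only step requiring genuine care --- and really the crux of the matter --- is the dimension count guaranteeing that the maximal rank equals $n$, so that a regular point is a true submersion onto an open subset of $\mathbb{R}^n$; I would also fix the convention that singular points of $V$ are counted among the critical points of $\Log_{|V}$, so that such points lie in $Critp(\Log_{|V})$ automatically and need not be treated separately. Once these points are settled, the open mapping theorem closes the argument immediately.
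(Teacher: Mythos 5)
Your proposal is correct and follows essentially the same route as the paper: argue by contradiction, observe that a non-critical point of $\Log_{|V}$ over $x$ is a submersion point, and conclude via the open mapping property of submersions that $x$ would lie in $\mathring{\mathscr{A}}(V)$, contradicting $x\in\partial\mathscr{A}(V)$. Your added dimension count ($2(n-1)\ge n$ for $n\ge 2$, so maximal rank means surjectivity onto $\mathbb{R}^n$) and the convention that singular points of $V$ count as critical are careful touches the paper leaves implicit, but the argument is the same.
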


\begin{proof}
Assume there exists a component $\mathcal{C}_x$ of $\Log^{-1}(x)\cap V$ which is not critical. This means that there exists $z\in V$ such that $\Log (z) = x$ and  $z$ is a regular point of the logarithmic map (i.e., the Jacobian $\Jac (\Log_{|V})_{z}$
of the logarithm map restricted to $V$ at the point $z$ has maximal rank).
The fact that the set of regular points of the logarithmic map is an open subset of $V$, implies that there exists an open subset $U_z$ in $V$ containing $z$, such that $\Log_{|U_z}$ is a submersion. Hence, the point 
$x$ must be in the interior of the amoeba 
and  not in its boundary. This contradict our  hypothesis on $x$.
\end{proof}

\begin{lemma}\label{lemmad}
Let $V$ be a complex algebraic hypersurface. Let $x$ be a regular  point of $Critv(\Log_{|V})$. Then with the above notation, the following statements are equivalent:
\begin{itemize}
\item[(i)]\, The point $x$ is  in $\partial\mathscr{A}(V)$;
 \item[(ii)]\, The convex hull of the vectors $\{v_i(x)\}_{i=1}^s$ does not contain the origin and
 the intersection of each holomorphic branch $\mathscr{B}_i(x)$ with $\Log^{-1}(x)$ is contained in
 $Critp(\Log_{|V})$.
 \end{itemize}
\end{lemma}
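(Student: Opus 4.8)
The plan is to reduce the statement to a local, convex-geometric assertion near $x$ and then to invoke a separation argument of Gordan--Farkas type. First I would use that $\Log$ has compact fibres $(S^1)^n$, so $\Log_{|V}$ is proper; since $x$ is regular, $\Log^{-1}(x)\cap V$ is a finite set, one point in each local branch $\mathscr{B}_1(x),\ldots,\mathscr{B}_s(x)$, and for a sufficiently small ball $B(x,\rho)$ one has $\mathscr{A}(V)\cap B(x,\rho)=\bigcup_{i=1}^{s}\mathscr{A}(\mathscr{B}_i(x))\cap B(x,\rho)$. The two analytic inputs I would extract from the regularity of $x$ (via the commutative diagram relating $\gamma$, $\gamma_c$ and the Gauss map $g$, together with Mikhalkin's description of $Critp(\Log_{|V})$ as the locus where the logarithmic Gauss map is real \cite{M-00}) are: each nonempty $\mathscr{C}_i(x)$ is a smooth hypersurface through $x$ with unit inner normal $v_i(x)$, and near $x$ the branch amoeba is a one-sided fold, so that to first order $\mathscr{A}(\mathscr{B}_i(x))\subseteq \overline{H_i}:=\{y:\langle y-x,v_i(x)\rangle\ge 0\}$ while it covers the open half-space $H_i^{\circ}:=\{y:\langle y-x,v_i(x)\rangle>0\}$.

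For the implication $(ii)\Rightarrow(i)$ I would argue as follows. The second condition of $(ii)$ guarantees that every branch over $x$ is critical, so the local reduction above expresses $\mathscr{A}(V)$ near $x$ as $\bigcup_i\mathscr{A}(\mathscr{B}_i(x))$ and all normals $v_i(x)$ are defined. Since the origin is not in $\mathrm{conv}\{v_i(x)\}_{i=1}^{s}$, Gordan's theorem (a linear alternative of Farkas type) produces a direction $w$ with $\langle w,v_i(x)\rangle<0$ for every $i$. By the one-sided containment $\mathscr{A}(\mathscr{B}_i(x))\subseteq\overline{H_i}$ and the strictness of the inequality, the point $x+\varepsilon w$ lies off every $\overline{H_i}$ for all small $\varepsilon>0$, hence $x+\varepsilon w\notin \mathscr{A}(V)$. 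As $x\in Critv(\Log_{|V})\subseteq\mathscr{A}(V)$, this shows $x\in\partial\mathscr{A}(V)$.

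For the converse $(i)\Rightarrow(ii)$, the inclusion $\Log^{-1}(x)\cap V\subseteq Critp(\Log_{|V})$, i.e. the second condition of $(ii)$, is exactly Lemma \ref{lemmaC}. To obtain the first condition I would use that $\mathscr{A}(V)$ is a hypersurface amoeba, so the complement component $E$ of $\mathscr{A}(V)$ with $x\in\partial E$ is convex. A supporting hyperplane of $E$ at $x$ yields an open cone of directions $w$ pointing into $E$, and for each such $w$ one has $x+\varepsilon w\in E\subseteq\mathscr{A}(V)^{c}\subseteq\mathscr{A}(\mathscr{B}_i(x))^{c}$ for small $\varepsilon>0$. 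Were $\langle w,v_i(x)\rangle>0$ for some $i$, the covering property $\mathscr{A}(\mathscr{B}_i(x))\supseteq H_i^{\circ}$ would force $x+\varepsilon w\in\mathscr{A}(\mathscr{B}_i(x))$, a contradiction; hence $\langle w,v_i(x)\rangle\le 0$ for all $i$ on an open cone of directions. Choosing $w_0$ in the interior of this cone, if $\langle w_0,v_i(x)\rangle=0$ then $w_0+\delta v_i(x)$ would still be an escape direction yet satisfy $\langle w_0+\delta v_i(x),v_i(x)\rangle=\delta>0$, a contradiction; thus $\langle w_0,v_i(x)\rangle<0$ for every $i$, and Gordan's theorem gives $0\notin \mathrm{conv}\{v_i(x)\}_{i=1}^{s}$.

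The step I expect to require the most care is the passage from the weak inequalities $\langle w,v_i(x)\rangle\le 0$ to the strict ones needed to conclude $0\notin\mathrm{conv}\{v_i(x)\}$ rather than merely $0\notin\operatorname{int}\mathrm{conv}\{v_i(x)\}$: a single escape direction only yields weak inequalities and cannot exclude the degenerate case in which $0$ sits on the boundary of the convex hull. It is precisely the convexity of the complement components of the hypersurface amoeba $\mathscr{A}(V)$ that upgrades one escape direction to a whole open cone of them, and an open cone of weak solutions is equivalent to a strict solution. A secondary technical point is justifying the one-sided fold model for $\mathscr{A}(\mathscr{B}_i(x))$ and the identification of $v_i(x)$ with the real logarithmic Gauss direction; this is exactly where the regularity of $x$ in the sense of Definition \ref{regular-point} is essential, since it excludes the positive-dimensional or singular fibres for which no single normal $v_i(x)$ would exist.
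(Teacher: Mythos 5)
Your proof is correct, and its overall architecture is the same as the paper's: decompose $\mathscr{A}(V)$ near $x$ into the local amoebas of the finitely many branches over the fibre, use the one-sided fold model attached to the normals $v_i(x)$, and conclude by a separation argument on $\mathrm{conv}\{v_i(x)\}$; your $(ii)\Rightarrow(i)$ is essentially the paper's argument with Gordan's theorem made explicit instead of a proof by contradiction. The genuine difference is in $(i)\Rightarrow(ii)$. The paper takes a single escape direction $v$ from Claim A, shows $x+\varepsilon v$ stays outside $\mathrm{conv}\{x,(x+v_i(x))_i\}$, and then asserts that $x$ must therefore be a vertex of that hull; this inference fails when the cone spanned by the $v_i(x)$ is lower-dimensional (for instance two antiparallel normals in $\mathbb{R}^3$: any direction orthogonal to both escapes the hull, yet the origin lies in $\mathrm{conv}\{v_1,v_2\}$), so a single escape direction only yields the weak inequalities $\langle w,v_i(x)\rangle\le 0$ and cannot by itself exclude $0$ from the boundary of the hull. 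You close exactly this degenerate case: the convexity of the complement component $E$ with $x\in\partial E$ gives an \emph{open} cone of escape directions, and openness is what lets you perturb $w_0$ by $\delta v_i(x)$ to upgrade each weak inequality to a strict one, after which $0\notin\mathrm{conv}\{v_i(x)\}$ follows cleanly. So your route is not merely equivalent; it supplies the missing step in the paper's own treatment (the paper uses complement-component convexity only inside Claim A, never to rule out $0$ sitting on the boundary of the hull). What the paper's looser argument buys is brevity; what yours buys is an actual proof of the degenerate case, at the cost of leaning explicitly on the first-order fold model $\mathscr{A}(\mathscr{B}_i(x))\subseteq\overline{H_i}$ and $\mathscr{A}(\mathscr{B}_i(x))\supseteq H_i^{\circ}$ near $x$ --- but that model is precisely the content of the paper's standing assumption that $v_i(x)$ exists and points into the local branch amoeba, so you assume nothing the paper does not.
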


\begin{proof}
 $(i)\Longrightarrow (ii)$. As $x\in \partial\mathscr{A}(V)$, then there exists a vector direction  $v$ such that $(x+ \varepsilon v)\notin \mathscr{A}(V)$ for all small strictly positive numbers $\varepsilon$ (see {\it Claim} A). This is equivalent to the fact  that  there exists $\eta$ such that for  any strictly  positive number $\varepsilon \leq \eta$ the
 vector $ (x+\varepsilon v)$   is outside the convex hull of $\{ x, (x+ v_i(x))_{i=1}^s\}$. Indeed, if there exists a sequence $\varepsilon_m$ such that $ (x+\varepsilon_m v)$ is contained in the convex hull of $\{ x, (x+ v_i(x))_{i=1}^s\}$, then the fact that the number of branches is finite (because $V$ is algebraic), implies that for a small $\varepsilon$, the vector $(x+\varepsilon v)$ is contained in the local amoeba of some local holomorphic branch  $\mathscr{B}_s(x)$, and then $(x+\varepsilon v)$ is contained in the amoeba itself. This contradict the choice of $v$, and then $x$ must be a vertex of the convex hull of $\{ x, (x+ v_i(x))_{i=1}^s\}$. This means that the convex hull of the vectors $\{v_i(x)\}_{i=1}^s$ does not contain the origin.
 Finally, 
  the fact that all the local holomorphic branches $\mathscr{B}_i(x)$ intersect 
  $Critp(\Log_{|V})$ with $x\in \mathscr{C}_i(x)$ is a consequence of Lemma \ref{lemmaC}.\\
 $(ii)\Longrightarrow (i)$. All the local holomorphic branches $\mathscr{B}_i(x)$ intersect $Critp(\Log_{|V})$ with $x\in \mathscr{C}_i(x)$   and the convex hull of the vectors $\{v_i(x)\}_{i=1}^s$ does not contain the origin means that  $x$ is not  in the convex hull of the vectors $\{(x+v_i(x))\}_{i=1}^s$. This implies that
 there exists a vector direction $v$ not in the convex hull of $\{v_i(x)\}_{i=1}^s$ such that $(x+\varepsilon v)\notin \mathscr{A}(V)$ for any small positive number $\varepsilon$.  In fact, if for any vector direction $v$, the vector $(x+\varepsilon v)$ is contained in $\mathscr{A}(V)$ means that for any $v$ there exists a local holomorphic branch $\mathscr{B}_s(x)$ such that $(x+\varepsilon v)$ is contained in the local amoeba of $\mathscr{B}_s(x)$ for any small strictly positive number $\varepsilon$. As the number of local holomorphic branches is finite, then the point $x$ must be in the interior of the convex hull of $\{(x+v_i(x))\}_{i=1}^s$. This contradict our hypothesis and then the point $x$ is in the boundary of the amoeba of $V$. 
\end{proof}

\begin{theorem}\label{boundary Amoeba}
Let $V$ be a complex algebraic hypersurface and $x$ be a regular point in $Critv(\Log_{|V})$ with $(\Log^{-1}(x)\cap V) \subset Critp(\Log_{|V})$. Then the convex hull of the vectors $\{v_i(x)\}_{i=1}^s$ does not contain the origin
if and only if $x\in \partial\mathscr{A}(V)$. In other words, the convex hull of the vectors $\{v_i(x)\}_{i=1}^s$  contains the origin if and only if  the point $x$ is contained in the interior of the amoeba.
\end{theorem}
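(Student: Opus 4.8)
The plan is to obtain Theorem \ref{boundary Amoeba} as an essentially immediate consequence of Lemma \ref{lemmad}, the only real work being to check that the standing hypothesis $(\Log^{-1}(x)\cap V)\subset Critp(\Log_{|V})$ precisely supplies the second clause of condition $(ii)$ there. First I would observe that each local holomorphic branch $\mathscr{B}_i(x)$ in the decomposition of a neighborhood of $\Log^{-1}(x)\cap V$ satisfies $\mathscr{B}_i(x)\cap\Log^{-1}(x)\subset \Log^{-1}(x)\cap V$; since by assumption the right-hand side lies in $Critp(\Log_{|V})$, the branch-intersection requirement of Lemma \ref{lemmad}$(ii)$ holds automatically. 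Consequently, under our hypothesis the single statement ``the convex hull of $\{v_i(x)\}_{i=1}^s$ does not contain the origin'' is equivalent to the full condition $(ii)$ of the lemma.

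With this reduction in hand, both directions follow at once. For the forward implication, if $x\in\partial\mathscr{A}(V)$ then Lemma \ref{lemmad}, implication $(i)\Longrightarrow(ii)$, yields that the convex hull of $\{v_i(x)\}_{i=1}^s$ omits the origin. For the converse, if that convex hull omits the origin, then combining this with the hypothesis $(\Log^{-1}(x)\cap V)\subset Critp(\Log_{|V})$ gives exactly condition $(ii)$, and the implication $(ii)\Longrightarrow(i)$ of the same lemma places $x$ on $\partial\mathscr{A}(V)$. This establishes the asserted equivalence between the convex hull missing the origin and $x$ lying on the boundary.

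Finally, for the reformulation phrased as ``in other words'', I would note that since $x\in Critv(\Log_{|V})$ the point $x$ is in particular the image under $\Log$ of a point of $V$, so $x\in\mathscr{A}(V)$; hence $x$ lies either on $\partial\mathscr{A}(V)$ or in the interior $\mathring{\mathscr{A}}(V)$. Negating the equivalence just proved then reads: the origin belongs to the convex hull of $\{v_i(x)\}_{i=1}^s$ if and only if $x\notin\partial\mathscr{A}(V)$, that is, if and only if $x\in\mathring{\mathscr{A}}(V)$. The only step requiring genuine care is precisely this dichotomy, namely the observation that a critical value is never outside the amoeba, so that the complement of the boundary within $\mathscr{A}(V)$ is exactly the interior; I expect this to be the sole substantive point, everything else being bookkeeping built directly on Lemma \ref{lemmad}.
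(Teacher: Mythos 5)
Your proposal is correct and takes essentially the same route as the paper: Theorem \ref{boundary Amoeba} is obtained as a direct corollary of Lemma \ref{lemmad}, with the standing hypothesis $(\Log^{-1}(x)\cap V)\subset Critp(\Log_{|V})$ supplying exactly the branch condition in part $(ii)$ of that lemma. The only cosmetic difference is that the paper re-argues the direction ``convex hull of $\{v_i(x)\}_{i=1}^s$ contains the origin $\Rightarrow x\in\mathring{\mathscr{A}}(V)$'' by a direct local argument, whereas you obtain its contrapositive from the implication $(i)\Rightarrow(ii)$ of Lemma \ref{lemmad} together with the (correct) observation that a critical value lies in $\mathscr{A}(V)$ and hence is either in $\partial\mathscr{A}(V)$ or in $\mathring{\mathscr{A}}(V)$; both arguments are valid and rest on the same key lemma.
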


\begin{proof}
Let $x$ be a  point of $Critv(\Log_{|V})$ such that  the set $(\Log^{-1}(x)\cap V) \subset$\\ $
 Critp(\Log_{|V})$ and suppose the convex hull of $\{(x+ v_i(x))\}_{i=1}^s$ contains $x$. This means that for any unit vector direction $v$ and for all small positive numbers $\varepsilon$ we have $(x+\varepsilon v) \in \mathring{\mathscr{A}}(V)$, which is equivalent to the fact that $x$ is contained in the interior of the amoeba $\mathscr{A}(V)$. If the convex hull of the vectors $\{v_i(x)\}_{i=1}^s$ does not contain the origin, by Lemma \ref{lemmad} and the hypothesis of our theorem,  the point $x$ is in the boundary of the amoeba $\partial\mathscr{A}(V)$.

\end{proof}

\section{Example of a non regular point in $Critv(\Log_{|V})$}

Let $\mathcal{H}$ be the real algebraic plane curve (hyperbola) parametrized as follows:
\begin{eqnarray*}
\rho :&\mathbb{C}^*\setminus \{ -1, -\frac{1}{6}\}&\longrightarrow\,\,\,(\mathbb{C}^*)^{2}\\
&z&\longmapsto\,\,\, \rho (z) =  -\frac {z+\frac{1}{6}}{z+1}.
\end{eqnarray*}
	Its amoeba has a non regular critical value $x_0$, called a pinching point by Mikhalkin (Remark 10,  \cite{M-00}). The inverse image of the point $x_0 = (-\frac{\log 3}{2}, \log |\frac{\sqrt{3} -5}{8}|)$  by the logarithmic map in $\mathcal{H}$ is a non geodesic circle (i.e.,  $\mathcal{H}\cap\Log^{-1}(x_0)$ is a circle but not geodesic in the flat torus $(S^1)^2 =\Log^{-1}(x_0)$).  As the set of critical points of the logarithmic map and the argument maps coincides (see \cite{M-00}),
we can check the  fact that $\mathcal{H}\cap\Log^{-1}(x_0)$ is a circle
by looking to the coamoeba of $\mathcal{H}$. The set of critical values of the argument map is a non geodesic circle $\mathscr{C}$ which has two different real points (i.e., intersect the finite real subgroup $(\mathbb{Z}_2)^2$ of  the hall real torus in two points) union the isolated point $(\pi , \pi)$. The circle $\mathscr{C}$ is also critical for the logarithmic Gauss map $\gamma_c$. More precisely, there are two real branches $\mathscr{B}_1$ and $\mathscr{B}_2$ of critical points intersecting $\mathscr{C}$ in two different real points contained in two different quadrants of $(\mathbb{R}^*)^2$, namely the quadrants $(+,-)$ and $(-,+)$. The image of each branch by the logarithmic map has an inflection point at $x_0$.

\begin{figure}[h!]
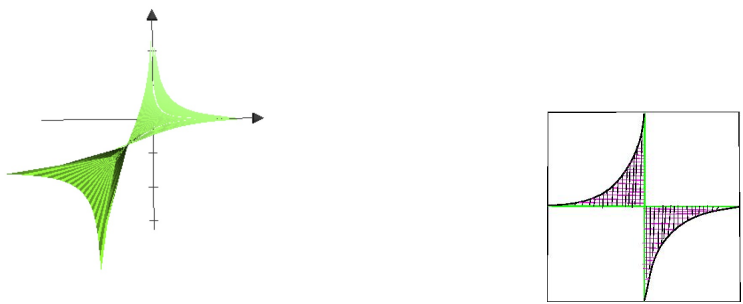

\begin{center}
\includegraphics[angle=0,width=0.76\textwidth]{Pinch-Amoeba-With-Axis.jpg}\quad
\includegraphics[angle=0,width=0.2\textwidth]{Extra-3.jpg}
\caption{the amoeba  and the coamoeba of the real hyperbola in $(\mathbb{C}^*)^2$ with defining polynomial $f(z,w) = \frac{1}{6} +z+w+zw$.}
\label{c}
\end{center}
\end{figure}


\end{document}